\newtheorem{thm}{Theorem}
\newtheorem{conj}{Conjecture}
\newtheorem{lemma}{Lemma}
\newtheorem{corollary}[thm]{Corollary}
\newenvironment{kst}
{\setlength{\leftmargini}{2\parindent}
 \begin{itemize}
 \setlength{\itemsep}{-1.1mm}}
{\end{itemize}}
\begin{document}

     %\pagecolor{cyan}

\pagestyle{plain}
\setlength{\baselineskip}{15pt}
\title{Every graph is uniform-span $(2,2)$-choosable:\\
 Beyond the 1-2 conjecture}

\author{Kecai Deng, Hongyuan Qiu}

\begin{center}

\end{center}
\address{
School of Mathematical Sciences, Huaqiao University, \\
              Quanzhou 362000, Fujian, P.R. China
}

\email{\qquad  kecaideng@126.com {\bf (K. Deng)}, hongyuan0.qiu@gmail.com {\bf (H. Qiu)}}
\thanks{Kecai Deng is the corresponding author. \\
\hspace*{4.3mm}Kecai Deng is partially supported by NSFC (No. 11701195),   Fundamental Research Funds for the Central
Universities (No. ZQN-904) and Fujian Province University Key Laboratory of Computational Science,
School of Mathematical Sciences, Huaqiao University, Quanzhou, China (No. 2019H0015).}

\makeatletter
\@namedef{subjclassname@2020}{\textup{2020} Mathematics Subject Classification}
\makeatother
\subjclass[2020]{05C15}
\keywords{The 1-2 conjecture;     total weighting;  irregular}

\begin{abstract} For a simple graph $G=(V,E)$, a \emph{proper total weighting} is a mapping  $w: V\cup E\rightarrow \mathbb R$ such that for every edge $uv\in E$, $w(u)+\sum_{e\ni u}w(e)\neq w(v)+\sum_{e\ni v}w(e)$. The graph  $G$ is said   $(2,2)$-\emph{choosable} if,   for any list assignment
$L$ that assigns to each    $z$ in $V\cup E$ a set $L(z)$ of two real numbers,
there exists a {proper total weighting} $w$ with $w(z)\in L(z)$ for every $z\in V\cup E$. Wong and Zhu,  and independently    Przyby{\l}o
and Wo\'{z}niak conjectured  that   every simple graph is $(2,2)$-choosable. This conjecture remains open.

 For a  set $\{a,b\}\subset \mathbb R$, its span is defined as  $|b-a|$.  We call a graph $G=(V,E)$   \emph{uniform-span} $(2,2)$-\emph{choosable} if,   for any list assignment
$L$ that assigns to every  $z\in V\cup E$ a two-element list of a common   span,
there exists a {proper total weighting}    respect to    the assignment. In this paper, we present a novel lemma  and perform comprehensive enhancements to our previous algorithm. These contributions enable us to prove   that every graph is  uniform-span $(2,2)$-choosable. This  confirms the 1-2 conjecture in   full generality, and provides   supporting evidence for the $(2,2)$-choosable conjecture.
\end{abstract}

\maketitle
\section{Introduction}
In this paper, we consider simple, undirected, and finite graphs. For   notation or terminology not explicitly defined here,   refer to \cite{Bondy}.

Let  $G=(V,E)$ be a graph. An \emph{edge weighting} is a mapping  $w: E \rightarrow \mathbb{R}$;   the \emph{weighted degree} of a vertex $v$   is defined as  $\sigma_w(v) =
 \sum_{e \ni v} w(e)$. The weighting  $w$   is  \emph{irregular}   if all vertices   receive   distinct weighted degrees, and  \emph{proper} if   adjacent vertices always differ in theirs. When  the image  of $w$ is restricted to a subset $Q\subseteq \mathbb{R}$ we speak of a \emph{$Q$-edge weighting}, and    a  $\{1,2,\ldots,k\}$-edge weighting  is also called  a \emph{$k$-edge weighting}. The \emph{irregular strength} $s(G)$ is the smallest  $k$ for which such an irregular  $k$-edge weighting exists,   whereas  $\tau_0(G)$ denotes the least    $k$ admitting a  proper $k$-edge weighting.

 Proceeding  analogously, a  \emph{total weighting}    is a mapping $w: V \cup E \rightarrow \mathbb{R}$  and the \emph{weighted degree} of  $v$  becomes   $\sigma_w(v) = w(v) + \sum_{e \ni v} w(e)$.  The same notion of  properness applies.
A \emph{$Q$-total weighting}  restricts weights to $Q\subseteq \mathbb{R}$ and a \emph{$k$-total weighting}  restricts weights to  $\{1,2,\ldots,k\}$.   Denote by $\tau(G)$ is the least  $k$ for which $G$ admits a proper  $k$-total  weighting.

The irregularity strength was first introduced by Chartrand, Jacobson, Lehel, Oellermann, Ruiz, and Saba \cite{Chartrand}. Faudree and Lehel's influential conjecture  \cite{Faudree} served as a catalyst for significant developments in the investigation of irregularity strength.

\begin{conj} \label{conj01}  \cite{Faudree}   There is a constant $C>0$  such that for all $d$-regular graphs $G$ on $n$ 	vertices and with $d>1$, $s(G)\leq \frac{n}{d}+C$.
\end{conj}
\noindent Although this
conjecture  itself  remains open,   Przyby{\l}o and Wei   \cite{Przybylo6,Przybylo7}    have recently confirmed its asymptotic validity. In 2011,
Kalkowski, Karo\'{n}ski and Pfender \cite{Kalkowski2011} provided a concise proof  that  every
nice graph (i.e., a graph without isolated edges) with minimum degree  $\delta\geq1$ satisfies     $s(G)\leq 6\lceil\frac{n}{\delta}\rceil$, and this bound remains     the best known result  for general nice graphs. Conjecture \ref{conj01} has not only laid the theoretical foundation for an entire research discipline but has also inspired numerous related studies, novel concepts, and intriguing questions. Recent examples include investigations into highly irregular spanning subgraphs  by    Alon and Wei \cite{AlonWei}, Fox, Luo and Pham \cite{FoxLuoPham}, Lu\v{z}ar,   Przyby{\l}o and Sot\'{a}k \cite{Luzar2025}, as well as Ma and Xie \cite{MaXie}.    A comprehensive bibliography on irregularity strength and Conjecture \ref{conj01}   can be found in    \cite{Przybylo7}.

 As the local variant of irregularity strength problem,       Karo\'{n}ski, {\L}uczak and
Thomason   \cite{Thomason}   proposed the following problem concerning   proper $k$-edge  weighting, known as the 1-2-3 conjecture.

 \begin{conj} \label{conj1} \cite{Thomason} $($1-2-3 Conjecture$)$ Every   nice graph admits a proper $3$-edge weighting. \end{conj}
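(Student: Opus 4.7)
The plan is to attack Conjecture~\ref{conj1} through a hybrid of orientation-based algorithmic ideas and a probabilistic relaxation step, since purely structural arguments have historically yielded only weaker constants, the sharpest unconditional bound in the literature being the $\{1,2,3,4,5\}$ theorem of Kalkowski, Karo\'{n}ski, and Pfender. First I would reduce to the case of a connected graph with minimum degree at least two: the \emph{nice} hypothesis already excludes $K_2$, and pendant vertices can be peeled off and their incident edges assigned at the end to balance the weighted degree of their unique neighbor.

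Next I would execute a two-phase procedure. In \emph{Phase~1}, orient $E(G)$ so that $|d^+(v) - d^-(v)| \le 1$ for every vertex~$v$; such a near-Eulerian orientation exists by a standard Eulerian-subgraph decomposition. In \emph{Phase~2}, process the vertices in BFS order from a fixed root, and at each vertex $v$ use the as-yet-unassigned in-edges as free parameters to resolve local conflicts. The local counting bound is that these in-edges offer $3^{d^-(v)}$ joint weight assignments, while the number of weighted-degree values that must be avoided at $v$ is at most $d(v)$, leaving substantial room to pick a safe weight whenever $d^-(v)\ge 1$; vertices with $d^-(v)=0$ require separate handling via a small spanning-tree reserve.

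The main obstacle I anticipate is enforcing \emph{global} consistency: a weight committed at $v$ also contributes to $\sigma_w$ at the other endpoint $u$, so downstream vertices inherit constraints that the local count ignores. I would address this by coupling the greedy algorithm with the Lov\'{a}sz Local Lemma, randomising the tie-breaking at each vertex and verifying that the failure events on non-adjacent regions of the graph are nearly independent, so that a globally valid weighting exists with positive probability. The step where I expect the argument to be tightest is the move from five weights down to three, and if the probabilistic slack proves insufficient I would fall back on a discharging argument that transfers flexibility from high-degree vertices to the low-degree pockets where the counting bound is tight.
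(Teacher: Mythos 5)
First, note that the paper does not prove Conjecture~\ref{conj1}: it is quoted as background from \cite{Thomason}, and the paper itself records that the conjecture was ultimately resolved by Keusch \cite{Keusch1} by a dedicated combinatorial argument. So there is no in-paper proof to compare against; what can be assessed is whether your plan would stand as a proof on its own. It would not.

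The central gap is that your local counting measures the wrong quantity, and the global repair step is left entirely unspecified. The $3^{d^-(v)}$ joint assignments to the in-edges at $v$ do not give $3^{d^-(v)}$ usable options: what matters for avoiding conflicts at $v$ is the set of achievable values of $\sigma_w(v)$, and a sum of $d^-(v)$ weights from $\{1,2,3\}$ takes at most $2d^-(v)+1$ distinct values. With a near-Eulerian orientation this is roughly $d(v)+1$, barely exceeding the $d(v)$ values to be avoided at $v$ alone --- and every adjustment of an in-edge $uv$ also shifts $\sigma_w(u)$ for its already-processed endpoint $u$, so the forbidden set effectively grows with the back-edges. This is precisely the tension that the Kalkowski--Karo\'{n}ski--Pfender scheme resolves only at the cost of five weights (by reserving two escape values per processed vertex), and your outline offers no concrete mechanism for getting from five down to three. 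The appeal to the Lov\'{a}sz Local Lemma does not close the gap either: the failure events at adjacent vertices share edges and are strongly dependent, and the known probabilistic arguments of this type succeed only under density or degree hypotheses (dense graphs, regular graphs of large degree, random graphs), not for all nice graphs. The sentence admitting you would ``fall back on a discharging argument'' if the probabilistic slack is insufficient is an acknowledgement that the decisive step is missing. Keusch's actual solution \cite{Keusch1} proceeds by a quite different, carefully engineered argument; nothing in your outline reproduces its key idea.
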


 \noindent Before its resolution, this conjecture garnered significant attention due to its elegance. For every nice      graph $G$,  Addario-Berry, Dalal, McDiarmid, Reed and
Thomason \cite{AddarioBerry}  initially  showed that $\tau_0(G)\leq30$. This upper bound was subsequently  reduced to   16 by Addario-Berry, Dalal and Reed \cite{AddarioBerry1},
to 13 by Wang and Yu \cite{Wang} and   to 5    by        Kalkowski, Karo\'{n}ski and Pfender \cite{Kalkowski}. For  specific  graph classes,  Zhong \cite{Zhong} demonstrated  that  there exists a threshold   $n'$ such that every graph $G$ with $n\geq n'$ vertices  and     $\delta> 0.99985n$ satisfies  $\tau_0(G)\leq3$. Przyby{\l}o proved the conjecture   for $d$-regular graphs with $d\geq10^8$ \cite{Przybylo}, and  for graphs  with $\delta=\Omega(\log \Delta)$ \cite{Przybylo1}. Additionally, it was shown that $\{1,2\}$  are asymptotically almost surely sufficient for a random graph  to admit  a proper edge weighting \cite{AddarioBerry1}. Regarding  algorithmic complexity,
    Dudek and Wajc  \cite{Dudek} proved  that determining whether a particular graph admits a proper 2-edge weighting is NP-complete, while Thomassen, Wu and Zhang  \cite{Thomassen1}   showed that this problem is polynomial-time solvable for bipartite graphs.   Recently, Keusch achieved two major breakthroughs, first lowering the upper bound to 4 \cite{Keusch},  then to 3  \cite{Keusch1}, thereby resolving the conjecture affirmatively.

 Building on the famous 1-2-3 conjecture, the 1-2 conjecture concerning proper $k$-total weighting was proposed by   Przyby{\l}o and Wo\'{z}niak  \cite{Przybylo2} in 2010, which  is not only a variant of Conjecture \ref{conj1}, but has also inspired methodological advances that subsequently led to improved bounds for several conjectures, including   Conjecture \ref{conj01}  and Conjecture \ref{conj1} itself.
 \begin{conj} \label{conj2} \cite{Przybylo2} $($1-2 Conjecture$)$  Every   graph admits a proper $2$-total weighting. \end{conj}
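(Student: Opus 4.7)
The plan is to parametrize $\{1,2\}$-total weightings by binary indicator functions on $V \cup E$ and reduce properness to an integer-valued combinatorial condition. Writing $w(z) = 1 + x_z$ with $x_z \in \{0,1\}$, the weighted degree becomes $\sigma_w(v) = (d(v)+1) + S_v$, where $S_v := x_v + \sum_{e \ni v} x_e$ is a non-negative integer. The proper condition for an edge $uv$ then reduces to the purely combinatorial statement $d(u) + S_u \neq d(v) + S_v$. For the broader $\{a,b\}$-version advertised in the abstract, an analogous computation gives $\sigma_w(v) = a(d(v)+1) + (b-a)S_v$, so properness becomes $S_u - S_v \neq \tfrac{a}{b-a}(d(v)-d(u))$; since $S_u - S_v$ is an integer, the constraint is vacuous unless the right side is itself an integer, reducing again to a condition that only ever forbids specific integer values of $S_u - S_v$.

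The core of the argument is therefore a combinatorial lemma: for every graph $G$ together with an assignment of a \emph{forbidden integer} $n(uv)$ per edge (with $n(uv) = -n(vu)$ after fixing an orientation), there exists $x : V \cup E \to \{0,1\}$ such that $S_u - S_v \neq n(uv)$ for every oriented edge $uv$. Applied with $n(uv) = d(v) - d(u)$ this yields the 1-2 conjecture, and applied with the integer-valued $n$ arising from the $(a,b)$ analysis above it yields the full generalization.

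To prove the combinatorial lemma, I would attempt a constructive vertex-by-vertex algorithm in the spirit of Kalkowski--Karo\'{n}ski--Pfender or Keusch. Order the vertices cleverly (say by BFS layer or by degree class); at each vertex $v$, once the half-edge variables coming from already-processed neighbors are fixed, the available freedom consists of one bit for $x_v$ together with one bit per not-yet-fixed incident edge. These bits shift $S_v$ locally, and the goal is to steer $S_v$ so that $S_v - S_u \neq n(uv)$ for every already-processed neighbor $u$. If the greedy construction gets stuck, a natural fallback is the Combinatorial Nullstellensatz: encode the forbidden equalities as zeros of a polynomial in the variables $\{x_z\}_{z \in V \cup E}$ and show that a suitable monomial has nonzero coefficient.

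The hard part will be the customary obstacle in 1-2(-3)-type results: at a high-degree vertex, the number of forbidden values may exceed the two-valued freedom provided by a single binary flip, so the ordering has to be designed carefully (handling, for instance, same-degree equivalence classes or large-degree vertices separately) to guarantee that local constraints never collide. The $\{a,b\}$-generalization amplifies this difficulty, because $n(uv)$ now depends on both the degree difference and on $a/(b-a)$ and may be any integer; the whole construction must therefore be uniform in the forbidden-value function $n$, rather than tuned to the specific $n(uv) = d(v) - d(u)$ arising from $(a,b) = (1,2)$.
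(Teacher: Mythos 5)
Your reduction is sound and in fact matches the paper's normalization: writing $w(z)=a+(b-a)x_z$ with $x_z\in\{0,1\}$, properness of $uv$ becomes $S_u-S_v\neq\frac{a}{b-a}\,(d(v)-d(u))$, which is exactly the bookkeeping the authors do when they start from the all-$a$ weighting and add increments of $b-a$. But the entire mathematical content of the theorem lies in the combinatorial lemma you state and do not prove, and both fallback strategies you name are known to be insufficient for precisely this problem. The Kalkowski-style vertex-by-vertex greedy is the argument that yields vertex weights in $\{1,2\}$ but \emph{edge} weights in $\{1,2,3\}$: when you reach $v$, each constraint $S_v-S_{u}\neq n(uv)$ is two-sided (the edge variable $x_{uv}$ sits inside both $S_v$ and $S_u$), and maintaining a consistent choice for all processed neighbors is exactly what forces the third edge value in Kalkowski's proof; no ordering trick is known to remove it. The Combinatorial Nullstellensatz route with $2$-element lists is the $(2,2)$-choosability conjecture (Conjecture 4 in this paper), which is open --- the best known permanent-type result gives $(2,3)$. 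So the proposal stops at the point where the actual work begins, and the "hard part" you flag is not a technicality but the whole theorem. A further caution: your lemma as stated (arbitrary integer forbidden value $n(uv)$ per edge) is strictly stronger than what is needed, since the forbidden values arising from $(a,b)$ always have the form $c\,(d(v)-d(u))$ for a fixed real $c$; proving the fully uniform version would be at least as hard and is not obviously true.

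For contrast, the paper's proof does not proceed vertex-by-vertex at all. It partitions $V$ into layers $I_1,\dots,I_l$ by iteratively extracting $\phi$-maximum independent sets from the subgraphs induced by degree classes sharing a common target value $q_i$, assigns each vertex of $I_i$ the target weighted degree $q_i$, and then processes the layers in decreasing order: a greedy pass re-weights edges among later layers, and the still-deficient ("hungry") vertices are fed by a star forest (a ``$(\phi,U)$-well subgraph'') whose existence is the content of Lemma 1, proved by an alternating-path augmentation exploiting the maximality of the weighted independent set. That global, layer-based structure is what circumvents the two-sided-constraint obstruction that defeats the local greedy approach.
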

 \noindent Initial progress on this conjecture showed that $\tau(G)\leq11$ for each graph $G$ \cite{Przybylo2}.  Since a graph can admit a 1-total weighting if and only if every pair of adjacent vertices has distinct degrees,   Przyby{\l}o noted in \cite{Przybylo3} that regular graphs present the most challenging case. Indeed, Przyby{\l}o  himself \cite{Przybylo3} improved the bound to
$\tau(G)\leq7$ for each regular graph $G$.
  Later,   Kalkowski \cite{Kalkowski1} demonstrated
that every graph $G=(V,E)$ admits a proper total weighting $w$ with $w(v)\in\{1,2\}$ for
each $v\in V$ and $w(e)\in\{1,2,3\}$ for each $e\in E$. This result was subsequently implied by Keusch's resolution \cite{Keusch1} of the 1-2-3 conjecture, and Wong and Zhu's work \cite{Wong1} on the list version of the 1-2 conjecture. In our recent work  \cite{dq}, we established   that every graph  admits a proper $\{0,1\}$-total weighting, through a simple and well-designed algorithm. This   provided the first example of a two-element set that guarantees proper total weighting for all graphs.  As a direct consequence,  the 1-2 conjecture was confirmed  for all regular graphs.

Notably,  Kalkowski's algorithmic approach \cite{Kalkowski1} developed for the 1-2 conjecture enabled  Kalkowski, Karo\'{n}ski and Pfender \cite{Kalkowski} to   dramatically improve the general bound for the 1-2-3 conjecture from 16 to 5. This approach has also been adapted to establish important  results for the hypergraph   generalization  \cite{Kalkowski2017}  of the 1-2-3 conjecture, and to derive  the currently best known  bounds on the irregularity strength of general nice graphs \cite{Kalkowski2011}.

Furthermore, the list variants of both the 1-2-3 conjecture  and the 1-2 conjecture  exhibit deep theoretical connections. We say a graph $G=(V,E)$ is \emph{edge-weight $k$-choosable} if for any list assignment
$L$ that assigns to each edge $e$ a set $L(e)$ of $k$ real numbers,
there exists a  proper edge weighting $w$ such that $w(e)\in L(e)$ for each $e\in E$. A list variant  of the 1-2-3 conjecture  was proposed by  Bartnicki,
Grytczuk and Niwczyk \cite{Bartnicki09}, which remains open.

\begin{conj} \label{conj3} \cite{Bartnicki09}   Every nice graph is edge-weight $3$-choosable.  \end{conj}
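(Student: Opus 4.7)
The plan is to attack Conjecture \ref{conj3} via the polynomial method, following the route originally proposed by Bartnicki, Grytczuk and Niwczyk. Set $\sigma(v)=\sum_{e\ni v}x_e$ and consider the graph polynomial
\[
P_G\bigl(\{x_e\}_{e\in E(G)}\bigr)=\prod_{uv\in E(G)}\bigl(\sigma(u)-\sigma(v)\bigr).
\]
By Alon's Combinatorial Nullstellensatz, it would suffice to show that, for every nice graph $G$, the polynomial $P_G$ has a monomial $\prod_e x_e^{\alpha_e}$ with $0\le \alpha_e\le 2$ and nonzero coefficient.

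First, I would gather base cases (paths, cycles, graphs with small maximum degree, bipartite graphs) where $P_G$ can be analyzed directly or where partial choosability results are already known, and use them as the foundation of an induction. Second, I would try a structural induction on a minimum counterexample $G$: delete a carefully chosen edge or peel off a leaf-like substructure, apply the induction hypothesis to the reduced graph, and then argue that the size-three list on the removed piece retains enough flexibility to extend the partial proper weighting to all of $G$. Along the way, I would attempt to exploit the real-number freedom afforded by the main theorem of the present paper, perhaps by encoding vertex-weight effects through auxiliary edges in a slightly enlarged graph, so as to import the extra degree of freedom from the total-weighting setting into the edge-only list setting.

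The main obstacle is precisely the absence of vertex weights. In the $\{a,b\}$-total weighting argument developed in this paper, a vertex weight supplies the extra degree of freedom needed to resolve the last local conflict; in the edge-only setting each variable $x_e$ simultaneously shifts the weighted degrees at both endpoints of $e$, and this coupling is what has historically made $P_G$ so stubborn to analyze. I expect the bulk of the effort to be spent proving non-vanishing of the relevant coefficient of $P_G$, and I anticipate that, as has been the case since the conjecture was posed, a genuinely new idea beyond the real-number total-weighting technique of this paper will be required to settle Conjecture \ref{conj3} in full generality; partial progress restricted to graph classes with exploitable structure (e.g.\ degeneracy or a convenient edge-ordering) seems a more realistic intermediate goal.
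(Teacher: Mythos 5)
This statement is not a theorem of the paper: Conjecture~\ref{conj3} is quoted from Bartnicki, Grytczuk and Niwczyk and is explicitly described in the paper as remaining open; the paper offers no proof of it, and its main result (Theorem~\ref{thm}) concerns total weightings, not edge-only list weightings. Your submission is likewise not a proof but a research plan, and you say so yourself: you state that a genuinely new idea beyond the techniques of this paper would be required. Concretely, the plan never establishes the one thing it identifies as decisive, namely that the graph polynomial $P_G$ has a nonvanishing coefficient on a monomial with all exponents at most $2$; the proposed induction on a minimum counterexample is never carried out, and the idea of importing the vertex-weight degree of freedom via auxiliary edges is left entirely unspecified (and is exactly where the known obstructions lie, since $K_4$-type configurations already defeat naive coefficient computations). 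So there is no gap to repair in a proof --- there is no proof, of either the student's or the paper's, and the conjecture should continue to be treated as open. What you have written is a reasonable survey of why the problem is hard, but it should not be presented as progress toward, let alone a solution of, the statement.
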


\noindent We say a graph $G=(V,E)$ is \emph{$(k, k')$-choosable} if for any list assignment
$L$ that assigns to each vertex $v$ a set $L(v)$ of $k$ real numbers and to each edge $e$ a set $L(e)$ of $k'$ real numbers,
there exists a  proper total weighting $w$ such that $w(z)\in L(z)$ for each $z\in V\cup E$. Wong and Zhu   \cite{Wong}, and independently   Przyby{\l}o
and Wo\'{z}niak   \cite{Przybylo4} proposed the following   list version of the 1-2 conjecture.
\begin{conj} \label{conj4} \cite{Przybylo4,Wong}  Every graph is $(2, 2)$-choosable.  \end{conj}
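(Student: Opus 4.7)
The plan is to extend the present paper's main result --- that any pair $\{a,b\}$ of distinct reals works for a proper total weighting --- to the list setting, where the admissible pair may vary across $V \cup E$. The natural normalization is to write $L(z) = \{\alpha_z, \alpha_z + \delta_z\}$ with $\delta_z \neq 0$ for each element $z$, and to encode a choice by a binary variable $\epsilon_z \in \{0,1\}$, so that $w(z) = \alpha_z + \epsilon_z \delta_z$. Then $\sigma_w(v) = C_v + \delta_v \epsilon_v + \sum_{e \ni v} \delta_e \epsilon_e$ for a constant $C_v$ depending only on the $\alpha$'s, and for any edge $f = uv$ the variable $\epsilon_f$ cancels from $\sigma_w(u) - \sigma_w(v)$. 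The task reduces to choosing $(\epsilon_z) \in \{0,1\}^{V \cup E}$ that avoids $|E|$ affine hyperplanes whose coefficients are prescribed $\pm\delta_z$'s.

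With this reformulation in hand, my first attempt would be to port the algorithm behind the $\{0,1\}$-total weighting result of \cite{dq} (and the $\{a,b\}$ extension that is the main theorem of this paper) directly to the non-uniform case. The local moves --- toggling an $\epsilon_v$ and then correcting some adjacent $\epsilon_e$ --- are syntactically identical; what must be re-examined are the invariants, which in the uniform case track weighted degrees by integer increments, but here produce increments $\pm\delta_z$ of varying magnitude and sign. If the algorithm can be rewritten so that its decisions depend only on the sign pattern of the conflicts (rather than on numerical slack), it may carry over; in particular I expect it to succeed when all $\delta_z$ share a common sign, since then a positive rescaling reduces to the uniform case already handled.

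The main obstacle is precisely the cancellations that arise when the $\delta_z$ have mixed signs: toggling an $\epsilon_v$ to resolve one conflict can create a new one at a neighbor whose affine equation happens to balance, and a chain of such corrections need not terminate. As a fallback I would invoke the Combinatorial Nullstellensatz on the polynomial $P := \prod_{uv \in E}\bigl(\sigma_w(u) - \sigma_w(v)\bigr)$, regarded as a polynomial in the $\epsilon_z$'s with coefficients in $\mathbb{R}[\delta_z : z \in V \cup E]$, and attempt to exhibit a multilinear monomial of degree $|E|$ whose coefficient is a nonzero polynomial in the $\delta_z$. This reduces, after expansion, to a permanent-type combinatorial identity keyed to the incidence structure of $G$, and ruling out the accidental sign cancellations there is exactly the reason Conjecture~\ref{conj4} has resisted proof in full generality.
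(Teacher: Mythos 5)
You have not proved Conjecture~\ref{conj4}, and to be clear, neither does the paper: this statement is quoted as an \emph{open} conjecture of Przyby{\l}o--Wo\'{z}niak \cite{Przybylo4} and Wong--Zhu \cite{Wong}, and the paper's actual contribution, Theorem~\ref{thm}, is the much weaker \emph{uniform} version in which a single pair $\{a,b\}$ is fixed in advance for all of $V\cup E$. Your own write-up concedes as much in its final sentence. So the review reduces to naming where each of your two strategies breaks, and why the uniform result does not transfer.

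The algorithmic route fails for a concrete, quantitative reason, not merely a sign issue. The engine of the paper is that every re-weighting changes a weighted degree by exactly $\pm(b-a)$; this is what allows the target values $q_i$ to be placed on the common grid $S=\cup_k S_k$ with $S_k=\{(k+1)a+j(b-a)\}$, the potentials $\phi_i(v)=\frac{q_i-(d_G(v)+1)a}{b-a}$ to be nonnegative \emph{integers}, and Lemma~\ref{lemma1} to run its counting argument comparing $\sum_{x\in I_H}\phi(x)=|U_H\setminus\{u_1\}|$ against $\sum_{y\in Y_0}\phi(y)$ --- an exchange of independent sets measured in whole units of $(b-a)$. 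With per-element increments $\delta_z$ there is no common unit: the notions of \emph{hungry}, \emph{full}, and \emph{tight} ($d_F(x)=\phi(x)$) lose their meaning, a vertex can overshoot its target by an amount that no subsequent toggle can repair, and the invariants (1.1)--(1.4) cannot even be stated. Your hope that the algorithm ``depends only on the sign pattern of the conflicts'' is exactly the unproved step; the paper's argument depends essentially on numerical slack being quantized. (Your remark that a common sign of all $\delta_z$ reduces to the uniform case is also wrong: rescaling each $\delta_z$ separately changes the lists element by element, which is precisely the non-uniformity you are trying to remove.)

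The Combinatorial Nullstellensatz fallback is the standard attack and is where the state of the art sits: it yields $(2,3)$-choosability \cite{Wong1} and $(1,5)$-choosability \cite{Zhu}, because in those regimes one can find a monomial with the required degree restrictions whose coefficient (a permanent of an associated matrix) is provably nonzero. For $(2,2)$ one needs a multilinear monomial of degree $|E|$ in $|V|+|E|$ variables with nonvanishing coefficient, and exhibiting it for all graphs is an open combinatorial problem, not a computation you have carried out. In short: your reformulation is correct and your diagnosis of the obstruction is accurate, but no proof is present, and none is known.
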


\noindent As a strengthening  of both the 1-2-3 conjecture and  Conjecture \ref{conj3}, Wong and Zhu  \cite{Wong} proposed the following conjecture.

\begin{conj} \label{conj5} \cite{Wong} Every nice graph is $(1, 3)$-choosable.  \end{conj}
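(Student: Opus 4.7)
\medskip

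\noindent\textbf{Proof proposal.} Because every vertex list has size $1$, the vertex weight $c_v := L(v)$ is forced by the list assignment, and for each edge $uv\in E$ the properness requirement reads
\begin{equation*}
(c_u - c_v) \;+\; \sum_{e\ni u} w(e) \;-\; \sum_{e\ni v} w(e) \;\neq\; 0.
\end{equation*}
Thus Conjecture~\ref{conj5} is exactly the ``vertex-shifted'' version of edge-weight $3$-choosability, and specialising to $c_v\equiv 0$ recovers Conjecture~\ref{conj3}; in particular any proof of Conjecture~\ref{conj5} must, along the way, confirm the (still open) Conjecture~\ref{conj3}.

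The natural weapon is Alon's Combinatorial Nullstellensatz applied to the graph polynomial
\begin{equation*}
P(x_e : e\in E) \;=\; \prod_{uv\in E}\Bigl((c_u-c_v) \;+\; \sum_{e\ni u} x_e \;-\; \sum_{e\ni v} x_e\Bigr).
\end{equation*}
Since $|L(e)|=3$ for every edge, it suffices to exhibit a monomial $\prod_e x_e^{d_e}$ with all $d_e\le 2$ whose coefficient in $P$ is nonzero. I would split $P = P_0 + R$, where $P_0$ is the top-degree homogeneous piece obtained by dropping the constants $c_u-c_v$ and $R$ has strictly lower total degree; any top-degree monomial with $d_e\le 2$ and nonzero coefficient in $P_0$ then survives in $P$, reducing the whole problem to a purely combinatorial property of the incidence structure of $G$. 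The combinatorial task is to certify such a monomial, which I would attempt via an Alon--Tarsi style orientation of $G$ or, equivalently, a parity argument on its signed cycle space, in the spirit of other polynomial proofs of additive list-colouring results.

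The heart of the plan, and precisely the reason Conjecture~\ref{conj5} is still open, is this certification step: identifying a nonvanishing coefficient of $P_0$ in full generality is equivalent to resolving Conjecture~\ref{conj3}, and neither Keusch's resolution of the non-list 1-2-3 conjecture nor Kalkowski's algorithmic approach immediately yields such a witness. To extract partial progress I would pair the polynomial plan with an inductive one: delete a carefully chosen edge or low-degree vertex, invoke the inductive hypothesis, and patch the result using the slack in the size-$3$ edge lists together with the present paper's guarantee that a proper total weighting exists over \emph{any} pair $\{a,b\}$ of distinct reals, which supplies the flexibility needed when adjusting prescribed vertex sums at the deletion boundary. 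Realistically, this combined plan should settle Conjecture~\ref{conj5} for structured classes such as trees, bipartite graphs, or graphs of bounded maximum average degree, while the general case appears to require a genuinely new idea at least as strong as a resolution of Conjecture~\ref{conj3}.
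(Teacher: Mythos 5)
The statement you were asked to prove is Conjecture~\ref{conj5}, which the paper does not prove: it is quoted from Wong and Zhu as an open strengthening of the 1-2-3 conjecture and of Conjecture~\ref{conj3}, and it remains open (the closest known results are $(2,3)$-choosability \cite{Wong1} and $(1,5)$-choosability \cite{Zhu}). Your proposal correctly recognizes this and is therefore a research plan rather than a proof. The reduction you set up is sound as far as it goes: with singleton vertex lists the vertex weights $c_v$ are forced, properness of an edge $uv$ is exactly your displayed inequality, and taking all vertex lists equal to $\{0\}$ shows Conjecture~\ref{conj5} implies Conjecture~\ref{conj3}. The Nullstellensatz setup is likewise the standard one from the weight-choosability literature: since $R$ has total degree strictly below $|E|$, a degree-$|E|$ monomial $\prod_e x_e^{d_e}$ with all $d_e\le 2$ and nonzero coefficient in $P_0$ would indeed prove both conjectures simultaneously.

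The gap is that you never produce such a monomial, an Alon--Tarsi orientation, or an inductive scheme that actually closes; every concrete step is deferred to ``the heart of the plan,'' which is precisely the open problem. Two further inaccuracies are worth flagging. First, certifying a nonvanishing top-degree coefficient with exponents at most $2$ is \emph{sufficient} for Conjecture~\ref{conj3}, but not known to be \emph{equivalent} to it: Keusch's proof of the non-list 1-2-3 conjecture is not algebraic, and Conjecture~\ref{conj3} could in principle be settled without such a certificate. Second, Theorem~\ref{thm} of the present paper produces a weighting from a single pair $\{a,b\}$ used uniformly on all of $V\cup E$; it gives no control when the admissible values vary from edge to edge, so it cannot serve as the ``patching'' tool at a deletion boundary that your inductive variant relies on. As written, the proposal establishes nothing beyond the already-known implication from Conjecture~\ref{conj5} to Conjecture~\ref{conj3}, and there is no proof in the paper to compare it against.
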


 \noindent  Significant  progress  has been made on Conjectures \ref{conj4} and \ref{conj5}.    In 2016,   Wong and Zhu \cite{Wong1} showed that    every graph is $(2, 3)$-choosable. In 2021, Cao \cite{Cao} showed that every nice graph is $(1, 17)$-choosable. Recently, by developing the method from   \cite{Cao},    Zhu \cite{Zhu}   showed  that every nice  graph is $(1, 5)$-choosable.

 For more problems and conjectures related to the 1-2-3 conjecture and the 1-2  conjecture, we refer the reader to \cite{Grytczuk,Seamone,Bensmail20231,Baudona} and the references cited there.

For a  set $\{a,b\}\subset \mathbb R$, its span is defined as  $|b-a|$.  We call a graph $G=(V,E)$   \emph{uniform-span} $(2,2)$-\emph{choosable} if,   for any list assignment
$L$ that assigns to every  $z\in V\cup E$ a two-element list of a common   span,
there exists a {proper total weighting}    respecting   the assignment.
In this paper, we present a novel lemma (see Lemma \ref{lemma1} in Section 2) and perform comprehensive enhancements to our previous algorithm from \cite{dq}. These contributions enable us to  prove  the following  statement, which  confirms the 1-2 conjecture in full generality and   provides   supporting evidence for the $(2,2)$-choosable conjecture.

   \begin{thm} \label{thm}
 Every graph is  uniform-span $(2,2)$-choosable.
\end{thm}

  \begin{corollary} \label{corollary}
 Every graph  admits  a proper $\{a,b\}$-total weighting, given arbitrary two distinct real numbers $a,b$; specially, every graph  admits  a proper $2$-total weighting.
\end{corollary}

\noindent The proof of     Theorem \ref{thm} is presented in Section 2.

Our methods and techniques might provide potential help for the related research topics we mentioned before.   Finally, we present the    following  question that lies   between the 1-2-3 conjecture and Conjectures \ref{conj3} and \ref{conj5}.

\begin{conj} \label{conj6} Every nice graph admits  an $\{a,b,c\}$-edge weighting, for arbitrary given pairwise distinct real numbers $a,b,c$.  \end{conj}

\section{Proof of Theorem \ref{thm}}

Let $G=(V,E)$ be a  graph.  For  $X,Y\subseteq V$ with $X\cap Y=\emptyset$, we denote   by $G[X,Y]$ the bipartite subgraph of $G$ with vertex set   $X\cup Y$ and edge set consisting of all    edges in $G$ that have one endpoint in   $X$ and the other in $Y$. For integers $k$ and $l$ with $k\leq l$, we   denote by $[k,l]$ the set of integers   $\{k,k+1,\ldots,l\}$.

 Let $\phi$ be a vertex coloring (not necessarily  proper) of   $G$ using non-negative integers. An independent set $I$ of $G$ is called
 \emph{{$\phi$-{maximum}}} if it maximizes $\sum_{v\in I}\phi(v)$  and each vertex in $V\setminus I$ has a neighbor in $I$.  

 Let $G=(W\cup U,E)$ be a bipartite graph where $W$ and $U$ are the two disjoint independent sets.  A subgraph $F$  of $G$ is called   a    \emph{$U$-star-covering}, if it covers each vertex of $U$ exactly once. That is, each component of $F$ is a star, where each vertex in $U$ is a leaf. Let   $\phi$ be a vertex coloring (not necessarily  proper) of   $G$ using positive integers. A vertex $x$ in $V_F\setminus U$ is   called \emph{$(\phi,U, F)$-{well}}  if $d_F(x)\leq \phi(x)$, and is called \emph{$(\phi,U, F)$-{tight}}  if $d_F(x)= \phi(x)$.  If   every vertex  in $V_F\setminus U$ is {$(\phi,U,F)$-well}, then $F$ is called a \emph{$(\phi,U)$-{well}} \emph{subgraph}.

   We now present the following useful lemma.

{\begin{lemma} \label{lemma1} Let $G=(V,E)$ be  a non-empty  graph, $\phi$ be a vertex coloring (not necessarily  proper) of $G$ with positive integers,   $I$ be a $\phi$-maximum independent set, and $U\triangleq V\setminus I=\{u_k|k\in[1,p]\}$. If  $u_k$ has at most  $\phi(u_k)-1$ neighbors among $\{u_{j}|j\in[k+1,p]\}$ in $G[U]$ for each $k\in[1,p-1]$, then $G[I,U]$ admits  a   $(\phi,U)$-well subgraph.\end{lemma}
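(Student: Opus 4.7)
The plan is to reduce the statement to a Hall-type condition on the bipartite graph $G[I,U]$ and then verify that condition using the $\phi$-maximality of $I$ together with the forward-degree hypothesis on $G[U]$.

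First, specifying a $U$-exact-covering subgraph $F$ of $G[I,U]$ is the same as specifying a map $f\colon U\to I$ with $f(u)\in N_G(u)\cap I$ (the edge of $F$ incident to $u$ being $\{u,f(u)\}$), and $F$ is $(\phi,U)$-well precisely when $|f^{-1}(v)|\le \phi(v)$ for every $v\in I$. Observe that each $u\in U$ does have a neighbor in $I$, for otherwise $I\cup\{u\}$ would be an independent set with strictly larger $\phi$-weight, contradicting the $\phi$-maximality of $I$. By Hall's theorem for bipartite $b$-matchings, the desired $f$ exists if and only if
\[
\sum_{v\in N_G(S)\cap I}\phi(v)\ \ge\ |S|\qquad\text{for every } S\subseteq U.
\]

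I will verify this Hall condition by contradiction. Suppose some $S\subseteq U$ violates it, and set $T:=N_G(S)\cap I$, so that $\phi(T)<|S|$. For any subset $S'\subseteq S$ that is independent in $G[U]$, the set $I':=(I\setminus T)\cup S'$ is independent in $G$: the pieces $I\setminus T$ and $S'$ are each independent, and by definition of $T$ there are no $G$-edges from $S$ to $I\setminus T$. Moreover,
\[
\phi(I')=\phi(I)-\phi(T)+\phi(S').
\]
Thus it suffices to produce such an $S'$ with $\phi(S')\ge|S|$, for then $\phi(I')>\phi(I)$, contradicting the $\phi$-maximality of $I$.

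The crux is therefore the following sub-claim, proved by induction on $|W|$: for every $W\subseteq U$ there exists an independent set $W'\subseteq W$ in $G[U]$ with $\phi(W')\ge|W|$. The case $W=\emptyset$ is immediate. For the inductive step, let $u_k$ be the smallest-index vertex of $W$; every neighbor of $u_k$ in $W$ then lies in $\{u_{k+1},\dots,u_p\}$, so by hypothesis there are at most $\phi(u_k)-1$ of them (the case $k=p$ being vacuously fine). Removing $u_k$ together with these neighbors from $W$ yields a set $W_0$ with $|W_0|\ge|W|-\phi(u_k)$. By the inductive hypothesis, $W_0$ contains an independent set $W^{*}$ with $\phi(W^{*})\ge|W|-\phi(u_k)$. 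Then $W':=\{u_k\}\cup W^{*}$ is independent in $G[U]$ (since $u_k$ has no neighbor in $W^{*}\subseteq W_0$) and satisfies $\phi(W')\ge|W|$, completing the induction.

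The main obstacle is spotting this sub-claim and recognizing that the ``$\phi(u_k)-1$'' budget furnished by the hypothesis is exactly calibrated to the weight $\phi(u_k)$ that is gained when $u_k$ is added back to $W^{*}$; once it is in hand, the remaining steps are routine bookkeeping with Hall's theorem and the definition of a $\phi$-maximum independent set.
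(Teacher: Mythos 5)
Your proof is correct. The reduction of a $(\phi,U)$-well subgraph to a map $f\colon U\to I$ with $f(u)\in N_G(u)\cap I$ and $|f^{-1}(v)|\le\phi(v)$ is faithful to the definitions (the star centers lie in $I$, and vertices of $I$ outside $V_F$ impose no constraint), the defect form of Hall's theorem for $b$-matchings is the right black box, and the inductive sub-claim correctly exploits the hypothesis: the smallest-index vertex of $W$ has all its $W$-neighbors among its at most $\phi(u_k)-1$ forward neighbors, so deleting it and them costs at most $\phi(u_k)$ vertices while adding it back gains weight $\phi(u_k)$.

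The route is genuinely different in structure from the paper's, though the two share their combinatorial core. The paper proceeds by induction on $|U|$: it takes a well subgraph $F$ for $U\setminus\{u_1\}$, grows the set of $(u_1,F)$-alternating paths, and shows that if no augmentation succeeds then the reachable sets $I_H$ and $U_H$ satisfy $\sum_{x\in I_H}\phi(x)=|U_H|-1$, after which a greedily chosen independent set $Y_0\subseteq U_H$ with $\sum_{y\in Y_0}\phi(y)\ge|U_H|$ is swapped into $I$ to contradict $\phi$-maximality. In effect the paper re-proves the defect Hall theorem via augmenting paths and only needs the deficiency bound for the single reachable set $U_H$; you instead invoke that theorem off the shelf and verify the Hall condition for \emph{every} $S\subseteq U$. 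The decisive idea is identical in both arguments: your sub-claim (every $W\subseteq U$ contains an independent set of $\phi$-weight at least $|W|$, built by repeatedly taking the lowest-indexed vertex and discarding its forward neighbors) is exactly the paper's $N^{+}$/$Y_0$ construction. Your version buys modularity and a cleaner statement of where the hypothesis is used; the paper's version is self-contained and implicitly algorithmic, which matters for the constructive flavor of the rest of the article. Either write-up is acceptable; if you keep yours, state precisely the version of Hall's theorem you are using (perfect matching of $U$ after splitting each $v\in I$ into $\phi(v)$ copies) so the reduction is explicit.
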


\begin{proof} Clearly, $I\neq \emptyset$, which implies  that $p<|V|$; on the other hand, $I\neq V$ because $G$ is not empty, so   $p>0$.   Then    for each $k\in[1,p]$, $u_k$ admits at least one neighbor $v_k$ in $I$ by the $\phi$-maximum property of $I$.

  If $p=1$, then $U=\{u_1\}$, and
  the graph induced by $v_1u_1$    is a $(\phi,{U})$-well  subgraph. If $p\geq 2$, assume   the conclusion holds for each $k<p$. Suppose, for contradiction,  that there is no $(\phi,{U})$-well subgraph in $G[I,U]$.    Let $G'=G-\{u_1\}$. Observe that
   $I$ is also a $\phi$-maximum independent set in $G'$.  By assumption,
 $G'[I,U\setminus\{u_1\}]$ admits   a
  $(\phi,{U\setminus\{u_1\} })$-well    subgraph $F$. Let $I_F=I\cap V_F$.

   If there exists $x\in N_{G[I,U]}(u_1)$ such that $x\notin I_F$, then the graph induced by $E_F\cup\{xu_1\}$ is a $(\phi,{U})$-well    subgraph of $G$, a contradiction. So $N_{G[I,U]}(u_1)\subseteq I_F$.

 A path $P=u_1x_1y_1x_2y_2\ldots x_{t}y_{t}$ in $G[I,U]$ is called   \emph{$(u_1,F)$-alternating}, if $\{x_jy_{j}|j\in[1,t]\}$ are in $F$. Let $\mathcal P$ be the set of all $(u_1,F)$-alternating paths,    $H$ be the graph induced by $\cup_{P\in \mathcal P}  E_{P}$,   $I_H= I\cap V_H$ and     $U_H=U \cap V_H $. By definition, we have:
\begin{equation} \label{eq1} {\rm each\ star\ component\ of}\  F\    {\rm either\ is\ included\ in}\   H,\  {\rm or\ is\ vertex\text{-}disjoint\ with}\  H.\end{equation}

Let  $P=u_1x_1y_1x_2y_2\ldots x_{t}y_{t}$ be an arbitrary path in $\mathcal P$. If there exists a vertex  $x_{i}$ in $P$ that is not $(\phi,U\setminus\{u_1\}, F)$-{tight}, then the graph induced by $(E_F\setminus\{x_{j}y_{j}|j\in[1,i-1]\})\cup\{y_{j}x_{j+1}|j\in[1,i-1]\}\cup \{u_1x_{1}\}$ is a  $(\phi,{U})$-well subgraph in $G[I,U]$, a contradiction.  Thus: \begin{equation} \label{eq2}
 {\rm each\ vertex\ in}\
I_H\ {\rm is}\   (\phi,U\setminus\{u_1\}, F){\rm\text{-}tight}.
\end{equation}
 If there exists a vertex $y_{i}$ in $P$ that has a neighbor $x$ in $I_F\setminus I_H$, then let  $y$ be a neighbor of $x$ in $F$. Clearly, neither $y_{i}x$ nor $xy$    is  in $H$. However, adding $\{y_{i}x,xy\}$  to the path    $u_1x_{1}y_{1}x_{2}y_{2}\ldots x_{{i}}y_{{i}}$   would yield a  $(u_1,F)$-alternating path that is not included  in $H$,  contradicting   the choice of $H$. Thus:
\begin{equation} \label{eq3}
{\rm each\ vertex\ in}\  U_H\  {\rm has\ no\ neighbor\ in}\  I_F\setminus I_H.
\end{equation}
 If there exists a vertex $y_{i}$ in $P$ that has a neighbor $x$ in $I\setminus I_F$, then the graph induced by $(E_F\setminus\{x_{j}y_{j}|j\in[1,i]\})\cup\{y_{j}x_{j+1}|j\in[1,i-1]\}\cup \{u_1x_{1},y_{i}x\}$ is a  $(\phi,{U})$-well subgraph in $G[I,U]$, a contradiction. Thus,
 \begin{equation} \label{eq4}
 {\rm each\ vertex\ in}\
U_H\ {\rm has\ no\ neighbor\ in}\ I\setminus I_F.
\end{equation}

Suppose $U_H=\{u_{i_j}|j\in[1,\beta]\}$ such that $1=i_1< i_2< \ldots  <   i_\beta$. For each $j\in [1,\beta]$, define   $N^+(u_{i_j})=\{u_{i_j}\}\cup (N_G(u_{i_j})\cap \{u_{i_l}|l> j\})$. Choose  an independent set $Y_0$ in $U_H$ as   follows.
\begin{kst}\item   Let $y_1=u_{i_1}$.
\item \vspace{1mm}For $s\geq 2$, choose $y_s=u_{i_{l_s}}$ such that $i_{l_s}$ is the smallest integer satisfying   $u_{i_{l_s}}\notin\cup_{t<s}N^+(y_t)$.
\end{kst}
   Let $\gamma$ be the least integer such that   $\cup_{t=1}^\gamma N^+(y_t)=U_H$, and let $Y_0=\{y_t|t\in[1,\gamma]\}$.   Then $Y_0$ is an independent set in $G$ contained in  $U_H$. On one hand, by (\ref{eq1}) and (\ref{eq2}), we have:
   \begin{equation*}
   \sum_{x\in I_H}\phi(x)=\sum_{x\in I_H}d_F(x)=|U_H\setminus \{u_1\}|.
   \end{equation*}
   On the other hand,  we observe that:
   \begin{equation*}
   |U_H|=|\cup_{t=1}^\gamma N^+(y_t)|\leq \sum_{t=1}^\gamma [(\phi(y_t)-1)+1]=\sum_{y\in Y_0} \phi(y),
   \end{equation*}
   where the second inequality follows from the given conditions of the lemma. Thus, we conclude that:
    \begin{equation*}
    \sum_{x\in I_H}\phi(x)<\sum_{y\in Y_0}\phi(y).
     \end{equation*} Let $I^\ast=(I\setminus I_H)\cup Y_0$.   Since $Y_0\subseteq U_H$, by (\ref{eq3}) and (\ref{eq4}), $I^\ast$ is an independent set in $G$. However,   $I^\ast$ has  a greater color sum than  $I$ under $\phi$, contradicting the $\phi$-maximum property of $I$.

Therefore, $G[I, U]$ admits       a $(\phi,U)$-well   subgraph. This completes the proof of Lemma \ref{lemma1}.
\end{proof}}

Let $w_0, w$ be two total weightings of a graph $G=(V,E)$. Denote by  $G_{w_0}$   the weighted graph under $w_0$. The  weighting $w$  is called a  \emph{$Q$-proper total weighting of} $G_{w_0}$  if: $w(z)\in Q\subseteq \mathbb R$ for each $z\in V\cup E$, and $\sigma_{w_0+w}(u)\neq \sigma_{w_0+w}(v)$ for each $uv\in E$.

{
\begin{proof}[\textbf{Proof of Theorem \ref{thm}}] It is sufficient to prove that every total weighted simple graph admits a   $\{0,a\}$-total weighting  for every  $a\in \mathbb R^+$ (where $a$ is regarded as the  list span  when consider the uniform-span $(2,2)$-choosability).

  Let $G=(V,E)$ be an arbitrary  simple graph, $w_0$ be an arbitrary total weighting of $G$ and $a$ be an arbitrary positive real number.   For  $v\in V$, denote by $Z_v=\{v\}\cup\{e|e\ni v\}$, and  define   $S_v=\{\sigma_{w_0}(v)+j\cdot a|j\in[0,d(v)+1]\}$. Let   $S=\cup_{v\in V} S_v =\{q_{1},q_{2},\ldots,q_\xi\}$. For each $i\in[1,\xi]$, let $V_i=\{v|S_v\ni q_{i}\}$.

We partition $V$ as follows.

\begin{kst}
\item    Let $G_1\triangleq G[V_1]$, $\phi_1(v)=0$ for each $v$ in $V_{1}$, and   $I_{1}$ be a maximum independent set in  $G_1$.

\item \vspace{1mm}  For $i\geq 2$, if $V_i-(\cup_{j<i}I_{j})\neq\emptyset$,     let $G_{i}\triangleq G[V_i-  (\cup_{j< i}I_{j})]$; let   \begin{equation} \label{eq5}
\phi_{i}(v)=\frac{q_i-\sigma_{w_0}(v)}{a},
\end{equation}
for   each $v$  in $G_{i}$; and let  $I_{{i}}$ be a $\phi_{i}$-maximum independent set in $G_{i}$.  Otherwise, we set $I_{{i}}=\emptyset$.
 \end{kst}
   By the definitions above, one has the following holds.
   \begin{equation} \label{eq60} {\rm If}\   i\geq 2\ {\rm and}\  V_{G_i}\neq \emptyset,\ {\rm   then\ for}\  v\in V_{G_i}\ {\rm and\  }\  j<i\ {\rm with}\ V_j \ni v,\    v\ {\rm  has\ a\ neighbor\ in}\  I_j, \end{equation}  since   $v$ is in $V_{G_j}\setminus I_j$ and $I_j$ is a  $\phi_{j}$-maximum independent set in $G_{j}$.
    By the definition of $S_v$ and by (\ref{eq5}), (\ref{eq60}), we have:
     \begin{equation} \label{eq6} {\rm each\ vertex}\ v\ {\rm in}\ V_{G_i}\  {\rm has}\  \phi_i(v)\  {\rm neighbors\ in}\   \cup_{j< i}I_j.\end{equation}
      If    $V_{\xi-1}-(\cup_{j<\xi-1}I_j)=\emptyset$, then $V_{\xi-1}-(\cup_{j<\xi-1}I_j)$ is an independent set, since each vertex $v$ in   $V_{\xi-1}-(\cup_{j<\xi-1}I_j)$ has $\phi_{\xi-1}(v)$    neighbors in $\cup_{j<\xi-1}I_j$ by (\ref{eq6}), and  since $\phi_{\xi-1}(v)=d(v)$  by (\ref{eq5}).   Let
   $l$ be the greatest integer such that $I_{l}\neq\emptyset$.

  If $V\setminus(\cup_{i=1}^l I_i)\neq\emptyset$,  let  $v\in V\setminus(\cup_{i=1}^l I_i)$.   By definition, $v\in V_j-(\cup_{i=1}^l I_i)$  for each $j\in S_v$. Suppose $S_v=\{j_1,j_2,\ldots,j_r\}$. Then by (\ref{eq6}), $v$ (which is in $V_{j_r}$) has $\phi_{j_r}(v)$ ($=d(v)+1$ by (\ref{eq5}))  neighbors in $\cup_{k<j_r}I_k$, a contradiction. So $V\setminus(\cup_{i=1}^l I_i)=\emptyset$ and   $V=I_1\oplus I_2\oplus\ldots \oplus I_{l}$,  where $I_i$ is allowed to be empty   for $i\neq 1,l$.

  We would construct a proper  $\{0,a\}$-total weighting $w$  of $G_{w_0}$ such that, for each $i\in[1,l]$ and each   $v\in I_i$,    $\sigma_{w_0+w}(v)=q_i$.

 In the  weighting  process, for $z\in V\cup E$, $z$ is   \emph{light} if $w(z)=0$, and   \emph{heavy} if  $w(z)=a$. For $i\in[1,l]$ and   $v\in I_i$, we say  $v$ is \emph{good}  if $\sigma_{w_0+w}(v)\leq q_i$,   \emph{full}  if $\sigma_{w_0+w}(v)=q_i$, and   \emph{hungry}  if $\sigma_{w_0+w}(v)< q_i$. By the definition,   $v$ becomes full if and only if we  finally add       $\phi_i(v)$  times
$a$ to $Z_v$, by (\ref{eq5}).
 For   each $i\in[1,l]$ and  $j=1,2$, denote by  $I^+_{i,j}=\{v\in I_i|\phi_{i}(v)\geq j\}$ and     $I_{i,0}=\{v\in I_i|\phi_{i}(v)=0\}$. Clearly, at the beginning, each vertex is good, where a vertex $v$    is full if and only if  $v\in I_{i,0}$ for some $i\in[1,l]$ (note that $I_{1,0}=I_1$).
  We will ensure that each vertex remains good throughout the process.

  Step by step for decreasing $i$, we first greedily weight    the     vertices and edges in $G[(\cup_{j> i}I_{j})\cap V_i]$ with $a$; denote by   $U_{i+1}$   the set of  vertices in $(\cup_{j> i}I_{j})\cap V_i$ that remain  hungry after the greedily weighting.
Then we use a $(\phi_{i},U_{i+1})$-well   subgraph $F_{i}$ in $G[I^+_{i,1},U_{i+1}]$ to add $a$ to every vertex  in $U_{i+1}$  by weighting each     edge  in  ${F_i}$ with $a$, without adding too much weight to any vertex in $I_i$. In particular, every
hungry vertex $v$ in $I_j$ gains    $a$    in each of some $\phi_j(v)$ steps, ensuring  that it becomes full after
the process. We would use Lemma \ref{lemma1} to guarantee  the
existence of  $F_{i}$. The  details are as follows.

 \vspace{1mm} For $i=l-1,l-2,\ldots,3,2$, suppose the following conditions hold.
\begin{kst}
 \item[(1.1)]  Each vertex in $V$ is good;
  \item[(1.2)] \vspace{1mm} Each $z$    in $\cup_{v\in\cup_{j<i}I_j}Z_v$ is light;
  \item[(1.3)] \vspace{1mm}   For each $t\geq i+2$,   each vertex $v$ in  $I_t\cap V_i$ (if not empty),  and   each $r\in[i+1,t-1]$ with $q_{r}\in S_v$ (if $r$ exists),  one has that $v$ is either is full, or is  adjacent to a vertex $v_r$ in $I_r$   such that $vv_r$ is heavy.
 \end{kst}

  We go   through the light   edges (if they  exist) in  $G[(\cup_{j>i}I_{j})\cap V_i]$ one by one as follows. Right before we consider an edge, if  both of its endpoints are hungry,   we weight it with   $a$; otherwise, we leave it light. After processing  the last edge,  weight each hungry vertex $(\cup_{j>i}I_{j})\cap V_i$ with  $a$.

     At this point,  let $U_{i+1}$ be the   set of hungry vertices in $(\cup_{j\geq i+1}I_{j})\cap V_i$. If $U_{i+1}\neq\emptyset$, let $U_{i+1}=\{u_{\gamma}|\gamma\in[1,p_{i+1}]\}$, where whenever $u_{\gamma}$ is  in $I_\alpha$ and $u_{{\gamma+1}}$ is  in $I_\beta$, one has that $\alpha\leq \beta$. By our weighting method, each vertex and each edge in   $G[U_{i+1}]$ is heavy.

 Let $u_{\gamma}$ be an arbitrary vertex in $U_{i+1}$ and suppose
$u_{\gamma}$ is in $I_t\cap V_i$ for some $t>i$. By  (1.3) and (\ref{eq60}), ${u_\gamma}$  is incident   to at least $\phi_t(u_\gamma)-\phi_i(u_\gamma)-1$ heavy edges whose other endpoints all lie in $\cup_{i<j<t}I_j$. So $u_\gamma$ receives a weight at least $\sigma_{w_0}(u_\gamma)+(\phi_t(u_\gamma)-\phi_i(u_\gamma)-1)\cdot a+a
=\sigma_{w_0}(u_\gamma)+(\phi_t(u_\gamma)-\phi_i(u_\gamma))\cdot a$. Since $u_\gamma$ is hungry,  $\sigma_{w_0}(u_\gamma)
+(\phi_t(u_\gamma)-\phi_i(u_\gamma))\cdot a
<q_t=\sigma_{w_0}(u_\gamma)+\phi_t(u_\gamma)\cdot a$  by (\ref{eq5}). It follows that $\phi_i(u_\gamma)>0$.

If $I^+_{i,1}= \emptyset$, then the color sum of $I_i$ under $\phi_i$ equals   0. However, $\{u_\gamma\}$ is an independent set in $G_i$ that has the color sum at least 1 (so greater than that of $I_i$) under $\phi_i$,   contradicting   the choice of $I_i$. So $I^+_{i,1}\neq \emptyset$.  Note that $\sum_{v\in I^+_{i,1}}\phi_i(v)=\sum_{v\in I_{i}}\phi_i(v)$ by definition. On one hand, if there exists an independent set $I^\ast$ in $G[I^+_{i,1},U_{i+1}]$ such
that $\sum_{v\in I^\ast}\phi_i(v)>\sum_{v\in I^+_{i,1}}\phi_i(v)$, then $I^\ast$
is an independent set in $G_i$ with a greater $\phi_i$-sum than $I_i$, a contradiction to
the choice of $I_i$. So $I^+_{i,1}$ has the greatest $\phi_i$-sum among all independent set in $G[I^+_{i,1},U_{i+1}]$.
On the other hand, if some $u$ in $U_{i+1}$   has no neighbor in $I^+_{i,1}$, then $I^+_{i,1}\cup\{u\}$ is an independent set
in $G_i$ with a greater $\phi_i$-sum than $I_i$, contradicting
the choice of $I_i$. Thus, by definition, $I^+_{i,1}$ is a $\phi_i$-maximum independent set
in $G[I^+_{i,1},U_{i+1}]$.  Moreover, since $u_\gamma$ is hungry, $u_\gamma$ is incident to at most $(\phi_t(u_\gamma)-1)-(\phi_t(u_\gamma)-\phi_i(u_\gamma))=\phi_i(u_\gamma)-1$ heavy edges in $G[(\cup_{j>t}I_j)\cap V_i]$.
  That is,  $u_\gamma$ admits at most $\phi_{i}(u_\gamma)-1$ neighbors in $\{{u_\zeta}|\zeta>\gamma\}$, since $I_t$ is an independent set. By the arbitrary choice of $u_\gamma$, and by Lemma \ref{lemma1}, there exists a $(\phi_{i},U_{i+1})$-well   subgraph $F_{i}$ in $G[I^+_{i,1},U_{i+1}]$. Weight   each edge in $F_{i}$ with $a$. At this point,  each vertex $v$ in $I_i$ is still good, since $d_{F_i}(v)\leq \phi_i(v)$ (which implies that $v$ is incident     to at most $\phi_i(v)$ heavy edges). Weight  each hungry vertex in $I_i$ with $a$.

  Now,  the following conditions hold.
  \begin{kst}
 \item[(2.1)]  Each vertex in $V$ is good;
  \item[(2.2)] \vspace{1mm} Each $z$    in $\cup_{v\in\cup_{j<i-1}I_j}Z_v$ is light;
  \item[(2.3)] \vspace{1mm}   For each $t\geq i+1$,   each vertex $v$ in  $I_t\cap V_{i-1}$ (if not empty),  and   each $r\in[i,t-1]$ with $q_{r}\in S_v$ (if $r$ exists),  one has that $v$ is either is full, or is  adjacent to a vertex $v_r$ in $I_r$   such that $vv_r$ is heavy.
 \end{kst}

After completing the iterative steps, clearly, each vertex in {$V\setminus(\cup_{i\geq 1}I^+_{i,2})$} is full. Let $t$ be an arbitrary integer in $[3,l]$ (note that  $I^+_{1,2}, I^+_{2,2}=\emptyset$). If $I^+_{t,2}\neq\emptyset$, let $v$ be an arbitrary vertex in $I^+_{t,2}$. Then by (2.3) and (\ref{eq6}), $v$ is incident to at least $\phi_{t}(v)-1$ heavy  edges. Moreover, $v$ is heavy  by our weighting method. Thus, $v$ is full. Since
$t$ and $v$ were chosen arbitrarily, every vertex in {$\cup_{i\geq 1}I^+_{i,2}$} is   full. Therefore, we have   obtained a proper $\{0,a\}$-total weighting   for $G_{w_0}$.

This completes our proof.
\end{proof}}

\noindent\textbf{Acknowledgments}

The   first author is supported by NSFC (No. 11701195), Fundamental Research Funds for the Central
Universities (No. ZQN-904) and Fujian Province University Key Laboratory of Computational Science,
School of Mathematical Sciences, Huaqiao University, Quanzhou, China (No. 2019H0015).

\end{document}